\documentclass{amsproc}

\usepackage[T2A]{fontenc} 
\usepackage[cp1251]{inputenc}
\usepackage[english]{babel} 
\usepackage{amsthm}
\usepackage{amsmath}
\usepackage{amssymb}
\usepackage{amsfonts}
\usepackage{mathtext}
\usepackage{mathrsfs}
\usepackage{cite}

\usepackage{a4wide}
\usepackage{amsmath}
\usepackage{enumerate}

\usepackage{amsmath,amsthm}
\usepackage{amsfonts}
\usepackage{amssymb}
\usepackage{graphicx, color, soul}
\usepackage[T1]{fontenc}
\usepackage{verbatim}

\usepackage{amscd}

\theoremstyle{plain}
\newtheorem{theorem}{\indent Theorem}

\newtheorem*{theorem*}{\indent Theorem} 
\newtheorem*{lemma*}{\indent Lemma}
\newtheorem*{corollary*}{\indent Corollary}
\newtheorem*{prop*}{\indent Proposition}
\newtheorem*{propos*}{\indent Proposition}
\newtheorem*{hypothesis*}{\indent Hypothesis}
\theoremstyle{definition}

\newtheorem{example}{\indent Example}
\newtheorem*{definition*}{\indent Definition}
\newtheorem*{remark*}{\indent Remark}
\newtheorem*{example*}{\indent Example}

\renewenvironment{proof}{\indent\textbf{Proof. }}{\hfill$\Box$}

\newcommand{\Hol}{\,\mathrm{Hol}\,}

\begin{document}
	
	\author{E.~A.~Kokin}\thanks{$^1$Saratov National Research State University named after N. G. Chernyshevsky, Russia, 410012, Saratov, Astrakhanskaya st., 83, evgeny@epromicro.com}
	
	\title{Isomorphism for the Holonomy Group \\ of a K-Contact Sub-Riemannian Space}
	
	\begin{abstract}
		The holonomy group of the adapted connection on a K-contact Riemannian manifold $(M, \theta, g)$ is considered. It is proved that if the orbit space $M/\xi$ of the Reeb field $\xi$ action admits a manifold structure, then the holonomy group of the adapted connection on $M$ is isomorphic to the holonomy group of the Levi-Civita connection on the Riemannian manifold $(M/\xi, h)$, where $h$ is the induced Riemannian metric on $M/\xi$. Thanks to this result, a simple proof of the de Rham theorem for the case of K-contact sub-Riemannian manifolds is obtained, stating that if the holonomy group of the adapted connection on $M$ is not irreducible, then the orbit space $M/\xi$ is locally a product of Riemannian manifolds.
		
		{\bf Keywords:} sub-Riemannian manifold of contact type, adapted connection, de Rham decomposition, holonomy group
		
		{\bf AMS Mathematics Subject Classification 2020:} 53C29; 53C17
	\end{abstract}
	

	\maketitle

	\section*{Introduction}
	
	Nonholonomic, contact, and sub-Riemannian manifolds occupy an important place in modern differential geometry. These spaces have numerous applications both in geometry and mathematical physics, geometric control theory, and analysis on manifolds with constraints given by nonintegrable distributions. Recently, particular attention has been paid to the study of connections naturally adapted to the sub-Riemannian and contact structure, see for example the monographs \cite{Kokin_Agr19,Kokin_Blair,Kokin_Gal1}. In particular, the adapted connection allows the study of global and local geometric properties of contact-type sub-Riemannian manifolds, such as curvature characteristics, Ricci and Einstein tensors, as well as questions related to holonomy groups. 
	
	We describe the content and results of the present article. In Section \ref{Kokin_sec1} the basics of holonomy group theory of Riemannian manifolds and the de Rham theorem are given. In Section \ref{Kokin_sec2} K-contact sub-Riemannian manifolds, the adapted connection and its holonomy group are defined. Section \ref{Kokin_sec3} gives a version of de Rham's theorem for K-contact sub-Riemannian manifolds recently proved in \cite{Kokin_Gro}. Section \ref{Kokin_sec4} contains the central result of this paper, describing the relationship between the holonomy group of the adapted connection of a K-contact sub-Riemannian manifold and the holonomy group of the Levi-Civita connection of the induced metric on the orbit space of the Reeb field action under the assumption that the orbit space admits a natural smooth manifold structure. An example of the contactization of a K\"ahler manifold is considered. The obtained isomorphism of holonomy groups allowed to give in Section \ref{Kokin_sec5} a simple proof of de Rham's theorem for K-contact sub-Riemannian manifolds. An example of an $S^1$-bundle over a symmetric K\"ahler space is considered.
	
	\section{Holonomy Groups and de Rham's Theorem for Riemannian Manifolds}\label{Kokin_sec1}

	This section presents results from \cite{Kokin_Besse}.
	Let $(N,h)$ be a connected Riemannian manifold. Denote by $\nabla^h$ the Levi-Civita connection on $N$. The components of the Levi-Civita connection are defined by the equality
	\begin{equation}\label{Kok_GammaLC}
	\Gamma^{hk}_{ij}=\frac{1}{2}g^{kl}\left(\partial_{x^i}g_{lj}+\partial_{x^j}g_{li}-\partial_{x^l}g_{ij}  \right).
	\end{equation}
	Each smooth curve $\gamma:[a,b]\to N$ defines the parallel transport
	$$\tau^h_\gamma:T_{\gamma(a)}N\to  T_{\gamma(b)}N,$$ defined as follows. 
	A vector field $X(t)$ along the curve $\gamma(t)$ is called parallel if
	$$\nabla_{\dot\gamma(t)}X(t)=0.$$
	This equation can be written in local coordinates as
	$$\dot X^i+\Gamma^{hi}_{jk}X^j\dot\gamma^k=0,$$ where $$X(t)=X^i(t)\partial_{x^i}.$$
	This shows that for each vector $X_a \in T_{\gamma(a)}N$ there exists a unique parallel vector field $X(t)$ along the curve $\gamma(t)$ such that
	$X(a)=X_a$, then the value $X(b)$ is the result of parallel transporting the vector $X_a$ along the curve $\gamma(t)$. The notion of parallel transport naturally extends to piecewise-smooth curves.
	
	Fix a point $p \in N$. The holonomy group $\Hol_p(\nabla^h)$ of the Levi-Civita connection of the Riemannian manifold at $p\in N$ is defined as the group of parallel transports along all piecewise-smooth loops at $p$. The holonomy group is a Lie subgroup of the orthogonal Lie group $\mathrm{O}(T_pN,h_p)$. Because the manifold $N$ is connected, the holonomy groups at all points of the manifold are isomorphic.

	Suppose that $(N,h)$ is a product of Riemannian manifolds
	$$(B_1,g_1)\times\cdots\times (B_r,g_r).$$
	Then the holonomy group $\Hol_p(\nabla^h)$ is the product of the corresponding holonomy groups:
	$$\Hol_p(\nabla^h)=\Hol_{p_1}(\nabla^{g_1})\times\cdots\times \Hol_{p_r}(\nabla^{g_r}),$$
	where $$p=(p_1,\dots,p_r)\in B_1\times\cdots\times B_r=N.$$ This means $\Hol_p(\nabla^h)$ preserves the decomposition
	$$T_pN=T_{p_1}B_1\oplus\cdots\oplus T_{p_r}B_r,$$ and hence the subgroup $\Hol_p(\nabla^h)\subset \mathrm{O}(T_pN,h_p)$ is not irreducible.
	
	Conversely, suppose $\Hol_p(\nabla^h)$ is not irreducible. Since the subgroup $\Hol_p(\nabla^h)\subset \mathrm{O}(T_pN,h_p)$ is completely reducible (the orthogonal complement to each invariant subspace is also invariant), there is a $\Hol_p(\nabla^h)$-invariant orthogonal decomposition
	\begin{equation}\label{Kok_eqRozlHolN}T_pN=F_1\oplus\cdots\oplus F_r.\end{equation}
	We formulate the local version of the de Rham theorem for Riemannian manifolds.
	
	\begin{theorem}\label{Kok_ThdeR}
		Suppose the holonomy group $\Hol_p(\nabla^h)$ of the Levi-Civita connection of the Riemannian manifold $(N,h)$ preserves the decomposition \eqref{Kok_eqRozlHolN}.
		Then for each point of $N$ there exists an open neighborhood $U\subset N$ such that the manifold $(U,h|_{U})$ is isometric to the product of Riemannian manifolds
		$$(B_1,g_1)\times\cdots\times (B_r,g_r).$$
	\end{theorem}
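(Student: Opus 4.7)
The plan is to prove the local de Rham decomposition by first integrating the decomposition of $T_pN$ to a decomposition of the tangent bundle over a neighborhood of $p$, then applying a simultaneous Frobenius-type argument, and finally using parallelism to show the local product structure is isometric.

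First, I would pick a simply connected open neighborhood $V$ of $p$ and extend each subspace $F_i \subset T_pN$ to a smooth distribution $D_i$ on $V$ by parallel transport along arbitrary paths in $V$ starting at $p$. The key point is that any two such paths differ by a loop at $p$, and holonomy of that loop preserves the decomposition by hypothesis, so $D_i$ is well defined. Since the connection is smooth and the $F_i$ are mutually orthogonal, the $D_i$ are smooth, mutually orthogonal, parallel distributions with $TV = D_1 \oplus \cdots \oplus D_r$.

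Next, I would show that each $D_i$ is involutive. Because $D_i$ is parallel, for any $X, Y \in \Gamma(D_i)$ we have $\nabla_X Y \in \Gamma(D_i)$, and since the Levi-Civita connection is torsion-free, $[X,Y] = \nabla_X Y - \nabla_Y X \in \Gamma(D_i)$. The same reasoning applies to the parallel distributions $D_i^\perp = \bigoplus_{j\neq i} D_j$, so every partial sum is involutive. By a simultaneous Frobenius argument, this gives local coordinates $(x^{(1)}, \ldots, x^{(r)})$ on a smaller neighborhood $U \subset V$ in which each $D_i$ is spanned by the coordinate vector fields $\partial/\partial x^{(i)}_\alpha$, yielding a local diffeomorphism $U \cong B_1 \times \cdots \times B_r$ with $B_i$ an integral leaf of $D_i$ through $p$.

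The main obstacle is the last step: showing that the metric $h|_U$ actually splits as a product metric $g_1 + \cdots + g_r$. Orthogonality of different blocks is immediate from $D_i \perp D_j$ for $i \neq j$. The nontrivial part is to prove that the block $h(\partial_{x^{(i)}_\alpha}, \partial_{x^{(i)}_\beta})$ depends only on the coordinates $x^{(i)}$, not on $x^{(k)}$ for $k \neq i$. This I would derive from $\nabla h = 0$, the parallelism of each $D_i$ (which makes each leaf totally geodesic), and the vanishing of coordinate Lie brackets $[\partial_{x^{(k)}_\gamma}, \partial_{x^{(i)}_\alpha}] = 0$: writing $\partial_{x^{(k)}_\gamma} h(\partial_{x^{(i)}_\alpha}, \partial_{x^{(i)}_\beta}) = h(\nabla_{\partial_{x^{(k)}_\gamma}} \partial_{x^{(i)}_\alpha}, \partial_{x^{(i)}_\beta}) + h(\partial_{x^{(i)}_\alpha}, \nabla_{\partial_{x^{(k)}_\gamma}} \partial_{x^{(i)}_\beta})$ and using $\nabla_{\partial_{x^{(k)}_\gamma}} \partial_{x^{(i)}_\alpha} = \nabla_{\partial_{x^{(i)}_\alpha}} \partial_{x^{(k)}_\gamma}$, together with the fact that $\nabla_{\partial_{x^{(i)}_\alpha}} \partial_{x^{(k)}_\gamma} \in \Gamma(D_k)$ by parallelism, each term is the inner product of a vector in $D_k$ with a vector in $D_i$, hence zero. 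This gives the required coordinate independence, and the metric on $U$ is therefore the product of the induced metrics $g_i$ on the leaves $B_i$.
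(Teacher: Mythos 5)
Your argument is correct, but note that the paper itself contains no proof of this theorem to compare against: Section~\ref{Kokin_sec1} presents it as background quoted from Besse's \emph{Einstein Manifolds}, and Theorem~\ref{Kok_ThdeR} is stated there without proof. What you have written is essentially the classical local de Rham argument from the literature: extend each invariant subspace $F_i\subset T_pN$ to a distribution $D_i$ by parallel transport (well defined exactly because every loop's holonomy preserves the decomposition, so two paths to the same point give the same image of $F_i$), observe that parallel distributions are involutive since $[X,Y]=\nabla_XY-\nabla_YX$ for the torsion-free Levi-Civita connection, take a chart simultaneously adapted to all the $D_i$, and then show the mixed metric blocks vanish and each diagonal block is independent of the other groups of coordinates via $\nabla h=0$ together with $\nabla_{\partial_{x^{(k)}_\gamma}}\partial_{x^{(i)}_\alpha}=\nabla_{\partial_{x^{(i)}_\alpha}}\partial_{x^{(k)}_\gamma}\in\Gamma(D_i)\cap\Gamma(D_k)=0$. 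The only points you gloss over are the smoothness of the parallel-transported distributions (usually verified by transporting along radial or coordinate paths) and the existence of a single chart adapted simultaneously to all $r$ involutive complementary distributions; both are standard lemmas and straightforward to fill in, so the proposal is a sound proof of the stated local version.
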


	\section{Holonomy Groups of K-Contact Sub-Riemannian Manifolds}\label{Kokin_sec2}

	A contact sub-Riemannian manifold is a triple $(M,\theta,g)$ where $\theta$ is a contact form on a smooth manifold $M$, and $g$ is a sub-Riemannian metric on the contact distribution $$D=\ker\theta.$$ Denote by $\xi$ the corresponding Reeb vector field uniquely defined by two conditions:
	$$\theta(\xi)=1,\quad \iota_\xi (d\theta)=0.$$
	The metric $g$ defines a horizontal connection (an analogue of the Levi-Civita connection)
	$$\nabla^g:\Gamma(D)\times \Gamma(D)\to\Gamma(D).$$
	
	We will consider adapted coordinate systems \cite{Kokin_Gal1,Kokin_Gal3,Kokin_Gal4}. Suppose the dimension of $M$ is $n=2m+1$. A coordinate system $x^1,\dots,x^n$ on $M$ is called adapted if  
	$$\xi=\partial_{x^n}.$$ Let indices $i,j,k,l$ take values $1,\dots, 2m$. If $y^1,\dots,y^n$ is another adapted coordinate system, then on the intersection of charts the relation
	$$y^i=y^i(x^1,\dots,x^{2m}), \quad y^n=x^n+F(x^1,\dots,x^{2m})$$
	holds.
	Since $$\theta(\xi)=1,$$ then relative to adapted coordinates we can write
	$$\theta=dx^n+\Gamma^n_idx^i.$$
	Therefore vector fields
	$$e_i=\partial_{x^i}- \Gamma^n_i\partial_{x^n}$$
	define a basis of the layers of the contact distribution $D$ in points of the domain of adapted coordinates.
	The coefficients of the horizontal connection $\nabla^g$ can be found by the formula
	\begin{equation}\label{Kok_Gammag}
	\Gamma^{gk}_{ij}=\frac{1}{2}g^{kl}\left(e_ig_{lj}+e_jg_{li}-e_lg_{ij}  \right),
	\end{equation}
	where
	$$g_{ij}=g(e_i,e_j).$$
	
	A contact sub-Riemannian manifold is called K-contact if
	$$L_\xi g=0.$$
	This means that in each adapted system of coordinates we have
	\begin{equation}\label{Kok_Kcont}
	\partial_{x^n}g_{ij}=0.
	\end{equation} We will henceforth consider only K-contact sub-Riemannian manifolds.
	
	The adapted connection on a K-contact sub-Riemannian manifold \cite{Kokin_FGR} is the connection $$\nabla^A:\Gamma(TM)\times \Gamma(D)\to\Gamma(D)$$ in the vector bundle $D\to M$ defined by
	$$\nabla^A_XY=\nabla^g_XY,\quad \nabla^A_\xi Y=[\xi,Y],\quad X,Y\in\Gamma(D).$$
	Its components are
	$$\Gamma^{Ak}_{ij}=\Gamma^{gk}_{ij}, \quad \Gamma^{Ak}_{nj}=0.$$
	The adapted connection is a special $N$-connection \cite{Kokin_Gal1,Kokin_Gal3,Kokin_Gal4}.
	
	The connection $\nabla^A$ defines parallel transport of tangent vectors to the contact distribution $D$ along piecewise-smooth curves on $M$. The holonomy group of the connection $\nabla^A$ at a point $x\in M$ is denoted by $\Hol_x(\nabla^A)$. These holonomy groups were first considered in \cite{Kokin_FGR}. There is an inclusion $\Hol_x(\nabla^A)\subset\mathrm{O}(D_x,g_x)$. Holonomy groups of adapted connections at different points are isomorphic.
	In recent work \cite{Kokin_GalHolK} it was shown that $\Hol_x(\nabla^A)$ is the holonomy group of some (abstract) Riemannian manifold, however the connection between this Riemannian manifold and the considered K-contact metric manifold was not established. 
	
	\section{De Rham Theorem for K-Contact Sub-Riemannian Manifolds}\label{Kokin_sec3}

	If the holonomy group $\Hol_x(\nabla^A)\subset\mathrm{O}(D_x,g_x)$ of the adapted connection is not irreducible, then by complete reducibility $\Hol_x(\nabla^A)$ preserves an orthogonal decomposition
	\begin{equation}\label{Kok_eqRozlHolM}T_xM=E_1\oplus\cdots\oplus E_r.\end{equation}
	
	The Reeb vector field $\xi$ defines a pseudogroup of diffeomorphisms of the manifold $M$. Denote by $M/\xi$ the orbit space of this pseudogroup. In general, the space $M/\xi$ has a complicated structure and is not a smooth manifold. However, each point of $M$ has an open neighborhood $U$ such that $U/\xi$ admits a natural manifold structure and there is a surjective submersion $$\mathrm{pr}:U\to U/\xi,$$ whose differential restricted to the layers of the contact distribution
	$$d_x\mathrm{pr}:D_xU\to T_{\mathrm{pr}(x)}U/\xi$$
	is an isomorphism of vector spaces.
	In this case, since the manifold is K-contact, the sub-Riemannian metric $g|_U$ defines a Riemannian metric on $U/\xi$ which we denote by $h_U$.

	The main result of \cite{Kokin_Gro} is the following theorem generalizing the local de Rham theorem for Riemannian manifolds.
	
	\begin{theorem}\label{Kok_ThGro}
		Let $(M,D,g)$ be a K-contact oriented sub-Riemannian manifold. Assume that the holonomy group of the adapted connection preserves the decomposition \eqref{Kok_eqRozlHolM}.
		Then for each point of $M$ there is an open neighborhood $U\subset M$ such that the manifold $(U/\xi,h_U)$ is isometric to the product of Riemannian manifolds$$(B_1,g_1)\times\cdots\times (B_r,g_r).$$
	\end{theorem}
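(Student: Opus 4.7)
The plan is to leverage the central result of Section \ref{Kokin_sec4}, which provides an isomorphism between $\Hol_x(\nabla^A)$ and the holonomy group of the Levi-Civita connection on the local orbit space, and then reduce the statement to the classical Riemannian de Rham theorem (Theorem \ref{Kok_ThdeR}).

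First, I would fix a point $x\in M$ and choose a neighborhood $U\subset M$ small enough that the projection $\mathrm{pr}\colon U\to U/\xi$ exists as a smooth submersion onto a manifold, with induced Riemannian metric $h_U$ on $U/\xi$ (this is exactly the setup recalled at the end of Section \ref{Kokin_sec2}). By construction, the differential $d_x\mathrm{pr}\colon D_x\to T_{\mathrm{pr}(x)}(U/\xi)$ is a linear isometry, where $D_x$ carries the inner product $g_x$ and $T_{\mathrm{pr}(x)}(U/\xi)$ carries $(h_U)_{\mathrm{pr}(x)}$.

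Next, I would invoke the main theorem of Section \ref{Kokin_sec4}: the map $d_x\mathrm{pr}$ conjugates $\Hol_x(\nabla^A)$ onto $\Hol_{\mathrm{pr}(x)}(\nabla^{h_U})$, giving a canonical isomorphism of subgroups of the corresponding orthogonal groups. Since the hypothesis provides a $\Hol_x(\nabla^A)$-invariant orthogonal decomposition $D_x=E_1\oplus\cdots\oplus E_r$ (reading \eqref{Kok_eqRozlHolM} in the horizontal direction, the $\xi$-direction being a trivial summand for the action), its image under $d_x\mathrm{pr}$ yields an orthogonal decomposition
$$T_{\mathrm{pr}(x)}(U/\xi)=F_1\oplus\cdots\oplus F_r,\qquad F_i=d_x\mathrm{pr}(E_i),$$
which is preserved by $\Hol_{\mathrm{pr}(x)}(\nabla^{h_U})$.

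Finally, I would apply Theorem \ref{Kok_ThdeR} to the Riemannian manifold $(U/\xi,h_U)$ at the point $\mathrm{pr}(x)$: after possibly shrinking $U$, the orbit space $(U/\xi,h_U)$ is isometric to a product of Riemannian manifolds $(B_1,g_1)\times\cdots\times (B_r,g_r)$, which is exactly the conclusion. The only genuinely delicate point is verifying that the isomorphism from Section \ref{Kokin_sec4} really does send the $\Hol_x(\nabla^A)$-invariant splitting of $D_x$ to a $\Hol_{\mathrm{pr}(x)}(\nabla^{h_U})$-invariant splitting of $T_{\mathrm{pr}(x)}(U/\xi)$; but this is automatic from the fact that $d_x\mathrm{pr}$ is the conjugating map itself. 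Everything else is bookkeeping about shrinking the neighborhood $U$ so that both the submersion property and the local product structure from Theorem \ref{Kok_ThdeR} hold simultaneously.
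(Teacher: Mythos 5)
Your proposal follows the paper's proof essentially verbatim: restrict to a neighborhood $U$ where $U/\xi$ is a manifold, transport the invariant splitting of $D_x$ through the isomorphism of Theorem \ref{Kok_ThHolIsom} via $d_x\mathrm{pr}$, and apply the Riemannian de Rham theorem (Theorem \ref{Kok_ThdeR}) to $(U/\xi,h_U)$, shrinking $U$ as needed. The only point to tighten is that Theorem \ref{Kok_ThHolIsom} applied to $U$ identifies $\Hol_{\mathrm{pr}(x)}(\nabla^{h_U})$ with the holonomy of the restricted connection, $\Hol_x(\nabla^A|_U)$, not with the full group $\Hol_x(\nabla^A)$; as the paper does explicitly, one uses the inclusion $\Hol_x(\nabla^A|_U)\subset\Hol_x(\nabla^A)$ to conclude that the restricted group still preserves the splitting, after which your argument goes through unchanged.
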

	
	\section{Isomorphism of Holonomy Groups}\label{Kokin_sec4}

	In this section we prove the following theorem.
	\begin{theorem}\label{Kok_ThHolIsom}Let $(M,D,g)$ be a K-contact sub-Riemannian manifold. Suppose the orbit space $M/\xi$ admits a smooth manifold structure such that the projection
		$$\mathrm{pr}:M\to M/\xi$$
		is a surjective submersion. Denote by $h$ the induced Riemannian metric on the orbit space $M/\xi$. Then for each point $x\in M$ there is an isomorphism of holonomy groups
		$$\Hol_x(\nabla^A)\cong\Hol_{\mathrm{pr}(x)}(\nabla^h).$$
	\end{theorem}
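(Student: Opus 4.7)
The plan is to construct the isomorphism by conjugating parallel transport with $d\mathrm{pr}_x$ and then checking that the two parallel transport equations coincide in adapted coordinates. I would work locally in adapted coordinates $(x^1,\dots,x^{2m},x^n)$. Since the change of adapted charts satisfies $y^i=y^i(x^1,\dots,x^{2m})$, the functions $x^1,\dots,x^{2m}$ descend to local coordinates on $M/\xi$, and $d\mathrm{pr}(e_i)=\partial_{x^i}$. The K-contact identity $\partial_{x^n}g_{ij}=0$ then says the components $g_{ij}$ drop to smooth functions $h_{ij}=g_{ij}$ on $M/\xi$, and the frame identification $e_i|_x\leftrightarrow\partial_{x^i}|_{\mathrm{pr}(x)}$ is an isometry.

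The first technical step is to match connection coefficients. From \eqref{Kok_Gammag}, $e_i g_{lj}=\partial_{x^i}g_{lj}-\Gamma^n_i\partial_{x^n}g_{lj}=\partial_{x^i}g_{lj}$ by the K-contact condition, so inserting this into the formula for $\Gamma^{gk}_{ij}$ yields the right-hand side of \eqref{Kok_GammaLC}, and hence $\Gamma^{Ak}_{ij}=\Gamma^{hk}_{ij}$. The second step is to verify $[\xi,e_i]=0$; using $\theta=dx^n+\Gamma^n_i dx^i$ and the Reeb identity $\iota_\xi d\theta=0$, a brief computation gives $\partial_{x^n}\Gamma^n_i=0$, so $[\xi,e_i]=-\partial_{x^n}(\Gamma^n_i)\partial_{x^n}=0$. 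In particular the frame $\{e_i\}$ is invariant under the Reeb flow.

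Next I would prove the key lemma: for any smooth curve $\sigma$ in $M$ with projection $\bar\sigma=\mathrm{pr}\circ\sigma$, the parallel transport $\tau^A_\sigma$ is intertwined with $\tau^h_{\bar\sigma}$ by the frame identification. Writing $X(t)=X^i(t)e_i$ and decomposing $\dot\sigma=\dot\sigma^i e_i+c\,\xi$ for some scalar $c$, and using both $\Gamma^{Ak}_{nj}=0$ and $[\xi,e_i]=0$, the equation $\nabla^A_{\dot\sigma}X=0$ reduces to
$$\dot X^k+\Gamma^{Ak}_{ij}X^j\dot\sigma^i=0,$$
which is independent of the $\xi$-component of $\dot\sigma$. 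Since $\dot{\bar\sigma}^i=\dot\sigma^i$ and $\Gamma^{Ak}_{ij}=\Gamma^{hk}_{ij}$, this is precisely the $\nabla^h$-parallel transport equation for $\bar X=X^i\partial_{x^i}$ along $\bar\sigma$. Two consequences: parallel transport along an integral curve of $\xi$ is the identity in the frame $\{e_i\}$, and $\tau^A_\sigma$ depends only on $\bar\sigma$.

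I would finish by defining $\Phi\colon\Hol_x(\nabla^A)\to\Hol_{\mathrm{pr}(x)}(\nabla^h)$ by $\Phi(\tau)=d\mathrm{pr}_x\circ\tau\circ(d\mathrm{pr}_x)^{-1}$. For a loop $\sigma$ based at $x$, the lemma yields $\Phi(\tau^A_\sigma)=\tau^h_{\bar\sigma}\in\Hol_{\mathrm{pr}(x)}(\nabla^h)$, so $\Phi$ is a well-defined injective group homomorphism into the target holonomy group. For surjectivity: given a loop $\gamma$ at $\mathrm{pr}(x)$, lift it horizontally to $\tilde\gamma$ starting at $x$; it ends at some $\phi_s(x)$ on the Reeb orbit of $x$, which we close up by a segment of the $\xi$-flow back to $x$. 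The $\xi$-segment contributes trivially to $\nabla^A$-parallel transport in the frame $\{e_i\}$, so the resulting loop $\sigma$ satisfies $\Phi(\tau^A_\sigma)=\tau^h_\gamma$. The main obstacle is the key lemma of the third paragraph, since it simultaneously requires three ingredients—the K-contact condition, the vanishing $\Gamma^{Ak}_{nj}=0$ built into the adapted connection, and the Reeb-invariance of the adapted frame—that together make the $\xi$-direction invisible to $\nabla^A$-parallel transport.
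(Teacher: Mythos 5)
Your core argument is the same as the paper's: pass to adapted coordinates, use the K-contact condition to identify $\Gamma^{Ak}_{ij}$ with the Christoffel symbols $\Gamma^{hk}_{ij}$ of $h$, observe that the $\nabla^A$-parallel transport equation along a curve in $M$ only involves the projected curve, and define the homomorphism by conjugation with $d_x\mathrm{pr}$; your verification that $\partial_{x^n}\Gamma^n_i=0$, hence $[\xi,e_i]=0$, is a useful detail that the paper leaves implicit in the assertion $\Gamma^{Ak}_{nj}=0$. Injectivity is handled identically.

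The one place where you genuinely deviate is surjectivity, and there your argument has a gap. You close a loop $\gamma$ in $M/\xi$ by taking its \emph{horizontal} lift through $x$, but the hypotheses only give that $\mathrm{pr}:M\to M/\xi$ is a surjective submersion; they do not make $D$ a complete Ehresmann connection, so the horizontal lift of a loop need not exist on the whole parameter interval. Concretely, take $M=\{(x,y,z)\in\mathbb{R}^3:|z|<1\}$ with $\theta=dz-y\,dx$, $\xi=\partial_z$, and the sub-Riemannian metric making $e_1=\partial_x+y\,\partial_z$, $e_2=\partial_y$ orthonormal: this is K-contact, $M/\xi\cong\mathbb{R}^2$ with $\mathrm{pr}(x,y,z)=(x,y)$ a surjective submersion, yet the horizontal lift of a circle of large radius satisfies $\dot z=y\dot x$ and leaves $M$ before the loop is completed. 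The fix is immediate from your own key lemma: since the $\xi$-component of the velocity is invisible to $\nabla^A$-parallel transport, you do not need a horizontal lift, only \emph{some} piecewise-smooth lift that returns to $x$. This is exactly what the paper does -- it covers $\mu([a,b])$ by finitely many adapted charts and chooses the $x^n$-component $f(t)$ of the lift freely on each piece (inserting, if necessary, vertical segments along the Reeb orbit, which project to constant curves and hence act trivially under the correspondence), so that the pieces glue into a loop at $x$ projecting onto $\mu$. With the horizontal lift replaced by such an arbitrary lift, your surjectivity argument, and hence the whole proof, goes through.
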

	
	\begin{proof} Recall that $\dim M=n=2m+1$. Consider an arbitrary adapted coordinate system $x^1,\dots,x^n$ on $M$. Then the functions $x^1,\dots,x^{2m}$ can be considered as local coordinates on the orbit space $M/\xi$. Relative to these coordinates the projection $$\mathrm{pr}:M\to M/\xi$$ looks like $$(x^1,\dots,x^{2m},x^n)\mapsto (x^1,\dots,x^{2m}).$$ By definition of the metric $h$ the equality $$h=g_{ij}dx^idx^j$$ holds. Since the manifold $(M,\theta,g)$ is K-contact, equality \eqref{Kok_Kcont} holds. From \eqref{Kok_GammaLC} and \eqref{Kok_Gammag} it follows that $$\Gamma^{gk}_{ij}=\Gamma^{Ak}_{ij}.$$ Let $\gamma:[a,b]\to M$ be a curve on the manifold $M$. In arbitrary adapted coordinates $x^1,\dots,x^n$ on $M$ we have $$\gamma(t)=\big(x^1(t),\dots,x^{2m}(t),x^n(t)\big).$$ Then the curve $\mathrm{pr}\circ\gamma$ on $M/\xi$ is given by $$(\mathrm{pr}\circ\gamma)(t)=\big(x^1(t),\dots,x^{2m}(t)\big).$$ From the parallel transport equations it follows that a section $$X(t)=X^i e_i$$ is parallel along $\gamma$ if and only if the vector field $$X^i\partial_{x^i}$$ is parallel along the curve $\mathrm{pr}\circ\gamma$. This shows that parallel transports along curves $\gamma$ and $\mathrm{pr}\circ\gamma$ determine each other. Moreover, the commutative diagram holds
		$$\begin{CD}
		D_{\gamma(a)} @>\tau^A_{\gamma}>> D_{\gamma(b)} \\
		@Vd_{\gamma(a)}\mathrm{pr}VV    @VVd_{\gamma(b)}\mathrm{pr}V \\
		T_{\mathrm{pr}(\gamma(a))}M/\xi @>\tau^h_{\mathrm{pr}\circ\gamma}>> T_{\mathrm{pr}(\gamma(b))}M/\xi
		\end{CD}$$

		Fix a point $x\in M$. Let $\gamma:[a,b]\to M$ be a piecewise smooth loop at $x\in M$. Then $\mathrm{pr}\circ\gamma$  is a piecewise smooth loop at $p=\mathrm{pr}(x)$ and the relation
		$$\tau^h_{\gamma}=d_{x}\mathrm{pr}\circ \tau^A_{\gamma}\circ (d_{x}\mathrm{pr})^{-1}$$
		holds. This defines a homomorphism
		$$\psi_x:\Hol_x(\nabla^A)\to\Hol_{\mathrm{pr}(x)}(\nabla^h)$$
		of Lie groups. Clearly, this homomorphism is injective. We prove that $\psi_x$ is surjective. Let $\mu:[a,b]\to N$ be a piecewise smooth loop at $p=\mathrm{pr}(x)$. Due to compactness of $\mu([a,b])$, the image $\mu([a,b])$ can be covered by a finite number of coordinate neighborhoods $V_1,\dots,V_r$, which are images of coordinate neighborhoods $U_1,\dots,U_r$ on $M$ with adapted coordinates. It can also be assumed that only neighboring charts have nonempty intersections. Fix numbers $$c_1,\dots,c_{r+1}\in [a,b],$$ such that
		$$c_1=a,\quad \mu(c_2)\in V_1\cap V_2,\dots,\mu(c_r)\in V_{r-1}\cap V_r,\quad c_{r+1}=b.$$
		Define a curve $\gamma:[a,b]\to M$ covering $\mu$. Let $$\big(x^1(t),\dots,x^{2m}(t)\big)$$ be the coordinate expression of the segment $\gamma|_{[c_s,c_{s+1}]}$. Define the curve $\gamma_i:[c_s,c_{s+1}]\to U_{s}\subset M$ in coordinates by $$\big(x^1(t),\dots,x^{2m}(t),f(t)\big).$$ Obviously, the functions $f(t)$ can be chosen so that the curves $\gamma_i$ together define a loop $\gamma$ at $x\in M$. Clearly, the loop $\gamma$ covers the loop $\mu$. This proves that the homomorphism $\psi_x$ is an isomorphism. The theorem is proved. \end{proof}
	
	\begin{example} 
		Recall that a Sasaki manifold is a triple $(M,\xi,g)$ where $(M,g)$ is a Riemannian manifold, endowed with a unit Killing field $\xi$, and the endomorphism field $\Phi$ defined by
		$$\Phi(X)=\nabla^g_X\xi,\quad X\in\Gamma(TM),$$ satisfies the conditions 
		\begin{align*}
		\Phi^2&=-\mathrm{id}_{TM}+\theta\otimes \xi,\\
		(\nabla^g_X\Phi)Y&=g(\xi,Y)X-g(X,Y)\xi,\quad\forall X,Y\in\Gamma(TM),\\
		\end{align*}    
		where $\theta$ is the 1-form dual to the vector field $\xi$. In this case $(M,\theta,g)$ is a K-contact sub-Riemannian manifold, and $\xi$ is the corresponding Reeb vector field.
		
		Let $A$ be either of the Lie groups $S^1$ or $\mathbb{R}$. Recall that the Sasaki manifold $(M,\xi,g)$ is called regular if the Reeb vector field $\xi$ generates a free and proper $A$-action. A regular Sasaki manifold $(M,\xi,g)$ is called the \emph{contactization} of a K?hler manifold $(N,h,\omega)$ if there exists a map $$\mathrm{pr}: M \to N,$$ defining a principal $A$-bundle such that
		$$g=\mathrm{pr}^* h,\quad d\theta=\mathrm{pr}^*\omega,$$
		see, for example, \cite{Kokin_ACHK}. 
		
		From theorem \ref{Kok_ThHolIsom} it follows that if a regular Sasaki manifold $(M,\xi,g)$ is a contactization of a K?hler manifold $(N,h,\omega)$, then there is an isomorphism of holonomy groups $$\Hol_x(\nabla^A)\cong\Hol_{\mathrm{pr}(x)}(\nabla^h).$$
	\end{example}
	
	\section{Proof of de Rham's Theorem for K-Contact Sub-Riemannian Manifolds}\label{Kokin_sec5}

	In this section we give a simple proof of theorem \ref{Kok_ThGro}.
	Suppose the holonomy group $\Hol_x(\nabla^A)$ of the adapted connection preserves the decomposition \eqref{Kok_eqRozlHolM}. Obviously, for every open subset $W \subset M$ such that $x \in M$ we have an inclusion $$\Hol_x(\nabla^A|_W)\subset \Hol_x(\nabla^A),$$ and consequently the holonomy group $\Hol_x(\nabla^A|_W)$ preserves the decomposition \eqref{Kok_eqRozlHolM}. Let $U$ be an open neighborhood of $x$ such that $U/\xi$ is a manifold. By theorem \ref{Kok_ThHolIsom} the holonomy group $\Hol_x(\nabla^A|_U)$ is isomorphic to the holonomy group $\Hol_p(\nabla^h|_{U/\xi})$, therefore $\Hol_p(\nabla^h|_{U/\xi})$ preserves the decomposition \eqref{Kok_eqRozlHolN}, where $$E_s=(d_{x}\mathrm{pr}) F_s,\quad s=1,\dots,r.$$ By theorem \ref{Kok_eqRozlHolN} there is an open subset $V \subset U/\xi$ such that the Riemannian manifold $(V,h_V)$ is isometric to the product of Riemannian manifolds. By shrinking $U$ if necessary, we can assume $U/\xi=V$. This proves theorem \ref{Kok_ThGro}.

	\begin{example} Consider the product of compact K\"ahler symmetric spaces
		$$(N,h)=(N_1,h_1)\times\cdots\times(N_r,h_r).$$ As above, the holonomy group of $(N,h)$ is the product of the holonomy groups of the factors.
		It is known \cite{Kokin_BFG} that the $S^1$-bundle obtained from the canonical bundle over $(N,h)$ is a sub-symmetric K-contact sub-Riemannian manifold $(M,\theta,g)$. According to theorem \ref{Kok_ThHolIsom} there is an isomorphism  $$\Hol_x(\nabla^A)\cong\Hol_{\mathrm{pr}(x)}(\nabla^h).$$ The de Rham decomposition for $(M,\theta,g)$ coincides with the original decomposition for  $(N,h)$.
	\end{example}


\end{document}